\documentclass[10pt,twoside]{article}
\usepackage{amssymb}

\usepackage{amsmath}
\usepackage{latexsym}
\usepackage{eget}
\pagestyle{myheadings}
\topmargin=-15mm\oddsidemargin=2mm\evensidemargin=8mm
\textheight=24.5cm\textwidth=15cm

\newcommand{\erw}{elephant random walk}
\newcommand{\cfn}{{\cal F}_n}
\newcommand{\cfm}{{\cal F}_m}

\newcommand{\cgn}{{\cal G}_n}

\newcommand{\ind}{ 1\hspace{-1mm}1}
\newcommand{\mem}{{\mathfrak M}}
\newcommand{\memn}{{\mathfrak M}_n}
\newcommand{\xnp}{X_{n+1}}

\newcommand{\xkm}{X_{k-1}}

\begin{document}
\date{}
\title{\textsf{The Elephant random walk with gradually increasing memory}}
\author{Allan Gut\\Uppsala University \and Ulrich Stadtm\"uller\\
Ulm University}
\maketitle

\begin{abstract}\noindent
In the simple random walk the steps are independent, viz., the walker has no memory. In contrast, in the \erw\ (ERW), which was introduced by Sch\"utz and Trimper \cite{erwdef} in 2004, the next step always depends on the whole path so far. Various authors have studied further properties of the ERW. In  \cite{111} we studied the case when the Elephant remembers only a finite part of the first or last steps. In both cases there was no separation into two different regimes as in the classical ERW. We also posed the question about what happens if she remembers a gradually increasing past. This paper will give some answers to that question. We also discuss related questions for ERW:s with delays.
\end{abstract}

\fot{60F05, 60G50,}{60F15, 60J10}{Elephant random walk, delay, number of zeroes, law of large numbers, central limit theorem, Markov chain}
{ERW with increasing memory}

\section{Introduction}
\setcounter{equation}{0}
\markboth{A.\ Gut and U.\ Stadtm\"uller}{Elephant random walk}

In the classical \emph{simple\/} random walk the steps are equal to plus or minus one and independent---$P(X=1)=1-P(X=-1)=p$, ($0<p<1$); the walker has no memory.  Motivated by applications in physics, although interesting in its own right, is the so called \erw\ (ERW),  for which every step depends on the whole process so far. The ERW  was introduced in \cite{erwdef} in 2004, the name being inspired by the fact that elephants have a very long memory. Limit laws were formally proved in \cite{bercu,Coletti} using martingale theory. Actually, in the earlier papers \cite{Heyde, James} such results  were shown for so-called correlated Bernoulli processes. In \cite{111} we studied the case when the elephant has a restricted memory; assuming that future steps depend only on some distant past, only a recent past, or a mixture of both. Inspired by a suggestion in \cite{bercu2} we allowed, in \cite{113}, the possibility of delays in that the elephant, in addition, always has a choice of staying put. 

Formally, the \erw\ is defined as a  random walk in which the first step $X_1$ equals 1 with probability $s\in [0,1]$ and  to $-1$ with probability $1-s$, where, for convenience, we assume that $s=p$. After $n$ steps, at position $S_n=\sumk X_k$, the next step is defined as
\bea\label{20}\xnp=\begin{cases} +X_{K},\ttt{with probability} p\in[0,1], \\-X_{K},\ttt{with probability} 1-p,\end{cases}\eea
where $K$ has a uniform distribution on the integers $1,2,\ldots,n$. 

In that notation we studied, in \cite{111}, the case where $K$ in (\ref{20}) is uniformly distributed over the set of points constituted by a restricted memory. In particular we considered the case where for some fixed $m$ the random variable $K$ is uniformly distributed over the memories $\mem=\{1,2,\dots,m\}$  or $\mem=\{n-m+1,n-m+2,\dots ,n\}$ 
or a mixture of both. In these cases there was no separation in a diffusion part and a superdiffusion part depending on the value of $p$ as in the ordinary ERW, for which the transitional $p$-value equals $3/4$. In \cite{111} we posed the question what would happen if $m=m_n\nearrow \infty$. Some numerical simulations concerning this situation can be found in \cite{csv, moura,scv}. The present paper will give some theoretical answers to that question. 
  
The organization of the paper is as follows: After presenting the setup in the following section, we treat the main case, for which $m_n\nearrow \infty$, such that $m_n/n \to 0$ in Section 3. The next Section extends the results to delayed walks. In Section 5 we discuss the number of zeros occuring in this case, and in Section 6 we present moments when $m_n/n \to \alpha \in (0,1]$. The paper closes with some comments.

\section{The setup}\label{setup}
\setcounter{equation}{0}

We begin by assuming that the elephant remembers the first $m =m_n$ steps, i.e., that ${\cal F}_n =\sigma\{X_1,X_2, \dots,X_m\}$, where now the quantity $m$ depends on time and  
is increasing to infinity but not too fast, i.e., we assume that we have a triangular scheme of random variables $X_m=X_{n,m}$ and $\sigma$-algebras
\begin{equation}\label{mn}
{\cal F}_{m_n}=\sigma\{X_{n,1},X_{n,2},\dots,X_{n,m_n}\},\ttt{where}m_n \to \infty \mbox{ and } \frac{m_n}{n} \to 0 \ttt{as}  \nifi\,.
\end{equation}
For convenience we shall suppress the subindex $n$, viz, writing $m=m_n$ and  $S_m=S_{m_n}$. 

We begin by computing moments. Note that the beginning of the walk   is following an ERW with full memory, for which we can borrow from, e.g.,  \cite{bercu} or \cite{Coletti}. Based on these results we start with the calculation of moments. In contrast to the case of fixed $m$ we are now facing three different regimes, namely $0<p<3/4$, $p=3/4$, and $3/4<p\le1$, respectively, as in the case with full memory. We also remark that the ERW reduces to a simple, symmetric random walk (a coin-tossing random walk) when $p=1/2$.

Before we start we need to keep the following in mind: When the elephant remembers the whole past the 
$\sigma$-algebra ${\cal G}_n=\sigma\{X_1,X_2,\dots, X_n\}$  is relevant for $X_{n+1}$ and (see, e.g.,  \cite{bercu}) the behavior of the next step is governed by the relation $E(\xnp\mid \cgn) =(2p-1)\cdot\frac{S_n}{n}$, and hence $E(S_{n+1}\,|\, \cgn)=\gamma_n S_n$ where $\gamma_n=(n+2p-1)/n$, for $n\in \mathbb{N}$. This implies that $M_n=a_nS_n$ with $a_n=\Pi_{k=1}^n\gamma_k^{-1}$ is a martingale which allows one to prove various limit theorems which, however, depend on the value of $p$ with the critical value $p=3/4$.

Our next tool is an analog for ERW:s with a restricted memory. Toward that end, let  $\{\cfn,\,n\geq1\}$ denote the   $\sigma$-algebras generated by the memory  $\memn$, i.e., ${\cal F}_n=\sigma\{X_i : i \in \memn \}$.  Then,
\bea\label{uli}
E(\xnp\mid \cfn)
=(p-q)\cdot\frac{\sum_{i\in \memn}X_i}{|\memn|} = (2p-1)\cdot\frac{\sum_{i\in \memn}X_i}{|\memn|},
\eea
noticing that only the first equality holds in the delayed case.

When we condition on steps that are not contained in the memory it means that the elephant does not remember them, and, hence, cannot choose among them in a following step. Thus, if $A\subset \{1,2,\ldots,n\}$ is an arbitrary set of indices,  then
\bea\label{uli2}
E(\xnp\mid \sigma\{X_i\,, \, i \in A\cup \memn\})=E(\xnp\mid\cfn)=(p-q)\frac{\sum_{i\in \memn}X_i}{|\memn|}= (2p-1)\frac{\sum_{i\in \memn}X_i}{|\memn|},
\eea
where, again, the first equality only holds in the delayed case.

Finally, in order to avoid special effects we assume throughout that $0<p<1$; note that $p=1$ corresponds to $X_n=X_1$ for all $n$, and $p=0$ to the case of alternating summands. Also $\delta_a$ is the dirac-measure at the point $a$, and  $c$ and $C$ denote numerical constants which may change from line to line.

\section{ERW:s with an increasing memory;\quad {$\memn = \{1,2,\ldots,m_n\}$}}\label{drei}
\setcounter{equation}{0}
We begin with some facts on the asymptotics of mean and variance.

\begin{proposition}\label{moments} Let $m,n\to\infty$, such that $m/n\to0$ as $\nifi$.\\[1.5mm]
\textbf{\emph{(a)}} If $0<p<3/4$, then
\beaa
E\Big(\frac{S_n\sqrt{m}}{n}\Big)&=&(2p-1)E(\frac{S_m}{\sqrt{m}})+2(1-p)\frac{m}{n} E(\frac{S_m} {\sqrt{m}})\to 0,\\
\var\Big(\frac{S_n\sqrt{m}}{n}\Big)&=&(2p-1)^2 \var(\frac{S_m}{\sqrt{m}})
+{\cal O}\Big(\frac{m}{n}\Big)=\frac{(2p-1)^2}{3-4p} +o(1).
\eeaa
\textbf{\emph{(b)}} If $p=3/4$, then
\beaa
E\Big(\frac{S_n\sqrt{m/\log m}}{n}\Big)&=&\frac12E(\frac{S_m}{\sqrt{m \log m}})
+\frac12\frac{m}{n} E(\frac{S_m} {\sqrt{m\log m}})\to 0,\\
\var\Big(\frac{S_n\sqrt{m/ \log m}}{n}\Big)&=&\frac14 \var(\frac{S_m}{\sqrt{m \log m}})\left(1+{\cal O}\Big(\frac{m}{n}\Big)\right)=\frac14 +o(1).
\eeaa
\textbf{\emph{(c)}} If $3/4<p<1$, then
\beaa
E\Big(\frac{S_n\,m^{2(1-p)}}{n}\Big)&=&(2p-1)E(\frac{S_m}{m^{2p-1}})+2(1-p)\frac{m}{n} E(\frac{S_m }{m^{2p-1}})\to (2p-1) E(L),\\
\var\Big(\frac{S_n\, m^{2(1-p)}}{n}\Big)&=&(2p-1)^2 \var(\frac{S_m}{m^{2p-1}})+{\cal O}\Big(\frac{m}{n}\Big)\sim (2p-1)^2\var(L),
\eeaa
where the random variable $L$ is defined in Theorems 3.7 and 3.8 of \cite{bercu} with
\[E(L)=\frac{2p-1}{\Gamma(2p)} \ttt{and} \var(L)=\frac{1}{(4p-3)\Gamma\big(2(2p-1)\big)}-\frac{(2p-1)^2}{\Gamma^2(2p)}\,.\] 
\end{proposition}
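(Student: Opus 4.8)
The plan is to exploit a conditional independence structure that the frozen memory creates. Fix the row index $n$, so that the memory $\memn=\{1,\dots,m\}$ is the fixed set of the first $m=m_n$ indices. By (\ref{uli2}) applied with $A=\{m+1,\dots,k\}$, for every $k\ge m$ the conditional law of $\xkp$ given the full history $\mathcal{G}_k:=\sigma\{X_1,\dots,X_k\}$ depends only on $S_m$, through
\[ E(\xkp\mid \mathcal{G}_k)=(2p-1)\frac{S_m}{m}=:\mu. \]
Since $\xkp\in\{-1,+1\}$, its conditional law is determined by this mean, and as $\mu$ is $\cfn$-measurable and does not involve $X_{m+1},\dots,X_k$, the increments $X_{m+1},\dots,X_n$ are, conditionally on $\cfn=\sigma\{X_1,\dots,X_m\}$, i.i.d.\ with conditional mean $\mu$ and conditional variance $1-\mu^2$. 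This is the one genuinely structural step; everything afterwards is bookkeeping.

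Granting this, first I would compute the conditional mean. Writing $S_n=S_m+\sum_{k=m+1}^n X_k$ and taking $E(\cdot\mid\cfn)$ gives $E(S_n\mid\cfn)=c_{n,m}S_m$ with $c_{n,m}=1+(n-m)(2p-1)/m=\big((2p-1)n+2(1-p)m\big)/m$. Taking expectations yields $E(S_n)=c_{n,m}E(S_m)$, and multiplying by the regime-dependent normalization reproduces the two-term expressions in (a)--(c) verbatim after splitting $c_{n,m}$ into its $(2p-1)n/m$ and $2(1-p)$ parts.

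For the variance I would use the law of total variance. The conditional i.i.d.\ structure gives $\var(S_n\mid\cfn)=(n-m)(1-\mu^2)=(n-m)\big(1-(2p-1)^2 S_m^2/m^2\big)$ together with $E(S_n\mid\cfn)=c_{n,m}S_m$, so
\[ \var(S_n)=(n-m)\Big(1-(2p-1)^2\frac{E(S_m^2)}{m^2}\Big)+c_{n,m}^2\,\var(S_m). \]
Equivalently one can derive the same identity from the one-step recursion $E(S_{k+1}^2\mid\mathcal{G}_k)=S_k^2+2\mu S_k+1$ together with $E(S_kS_m)=E(S_m^2)\big(1+(k-m)(2p-1)/m\big)$, which avoids invoking conditional independence explicitly. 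The leading contribution is $c_{n,m}^2\var(S_m)\sim(2p-1)^2(n/m)^2\var(S_m)$; multiplying by the squared normalization turns this into the stated main term $(2p-1)^2\var(S_m/\sqrt m)$ in (a), its analogue in (b), and $(2p-1)^2\var(S_m/m^{2p-1})$ in (c).

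The remaining, routine but careful, part is the asymptotic bookkeeping in each regime, for which I would borrow the full-memory moment asymptotics from \cite{bercu}: $E(S_m)=(2p-1)\Gamma(m+2p-1)/(\Gamma(2p)\Gamma(m))\sim(2p-1)m^{2p-1}/\Gamma(2p)$ in all cases, while $\var(S_m)\sim m/(3-4p)$ for $p<3/4$, $\var(S_m)\sim m\log m$ at $p=3/4$, and $S_m/m^{2p-1}\to L$ in $L^2$ for $p>3/4$, giving $E(S_m/m^{2p-1})\to E(L)$ and $\var(S_m/m^{2p-1})\to\var(L)$. The main obstacle is verifying that every correction is genuinely $\mathcal{O}(m/n)$: one must check that $(2p-1)^2E(S_m^2)/m^2\to0$, so that the first term contributes only $\mathcal{O}(m/n)$ — here $E(S_m^2)$ is of order $m$, $m\log m$, and $m^{4p-2}$ respectively, and in the superdiffusive case $m^{4-4p}/n=o(m/n)$ since $3-4p<0$ — and that the cross terms arising from $c_{n,m}^2=(2p-1)^2(n/m)^2+4(2p-1)(1-p)(n/m)+4(1-p)^2$ also contribute $\mathcal{O}(m/n)$ after normalization. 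Once these order estimates are in hand, substituting the borrowed limits $\var(S_m/\sqrt m)\to1/(3-4p)$, the critical analogue $\var(S_m/\sqrt{m\log m})\to1$, and $\var(S_m/m^{2p-1})\to\var(L)$ produces the three displayed limits.
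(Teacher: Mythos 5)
Your proposal is correct and takes essentially the same route as the paper: the paper likewise conditions on $\mathcal{F}_m$, obtains $E(S_n\mid\mathcal{F}_m)=S_m+(n-m)(2p-1)S_m/m$ and an exact expansion of $E(S_n^2\mid\mathcal{F}_m)$ via $E(X_kX_\ell\mid\mathcal{F}_m)=(2p-1)^2S_m^2/m^2$ (which is precisely the second-moment consequence of your conditional-i.i.d.\ observation), and then inserts Bercu's asymptotics for $E(S_m)$ and $\var(S_m)$ in the three regimes. Your law-of-total-variance identity is algebraically identical to the paper's formula (\ref{condVar}), so the difference is purely one of packaging.
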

\begin{remark}\label{rem31}\emph{
If $p=1/2$ then $E(S_n)=0$ and $\var(S_n)=n$, since the ERW reduces to a coin-tossing random walk in that case.}\vsb 
\end{remark}

As a main result we now present limit theorems.
\begin{theorem}\label{CLT}
\textbf{\emph{(a)}} If $0<p<3/4$, then
\[ \frac{S_n \sqrt{m}}{n} \dto {\cal N}_{0, (2p-1)^2/(3-4p)}  \ttt{as}\nifi\,.\]
\textbf{\emph{(b)}}  If $p=3/4$, then 
\[  \frac{S_n \sqrt{m/\log m}}{n} \dto {\cal N}_{0, 1/4}  \ttt{as}\nifi\,.\]
\textbf{\emph{(c)}}  If $3/4<p<1$, then
\[ \frac{S_n m^{2(1-p)}}{n} \dto (2p-1)\,L  \ttt{as}\nifi\,,\]
where the random variable $L$ was defined in Theorem 3.7 of \cite{bercu}.
\end{theorem}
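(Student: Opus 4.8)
The plan is to base everything on the decomposition $S_n=E(S_n\mid\cfm)+R_n$ with $R_n:=S_n-E(S_n\mid\cfm)$, showing that after the relevant normalization the conditional mean carries the whole limit while $R_n$ is asymptotically negligible. The crucial structural fact is that once the memory window has frozen at $\{1,\dots,m\}$, the increments $X_{m+1},\dots,X_n$ are, conditionally on $\cfm$, i.i.d.: each equals $\pm X_K$ with $K$ uniform on $\{1,\dots,m\}$ and an independent sign, whence $E(X_{k+1}\mid\cfm)=(2p-1)S_m/m$ and $\var(X_{k+1}\mid\cfm)=1-(2p-1)^2(S_m/m)^2$ for all $k\ge m$. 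Summing the $n-m$ increments gives $E(S_n\mid\cfm)=S_m+(n-m)(2p-1)S_m/m=(2p-1)\frac nm S_m+2(1-p)S_m$, i.e.\ exactly the conditional form of the mean computed in Proposition~\ref{moments}.

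For the conditional mean I would multiply by the case-dependent normalizer. In case \textbf{(a)} this yields $\frac{\sqrt m}{n}E(S_n\mid\cfm)=(2p-1)\frac{S_m}{\sqrt m}\big(1+{\cal O}(m/n)\big)$; since the first $m$ steps obey the full-memory dynamics, the CLT of \cite{bercu,Coletti} gives $S_m/\sqrt m\dto{\cal N}_{0,1/(3-4p)}$ and hence the conditional mean converges to ${\cal N}_{0,(2p-1)^2/(3-4p)}$. Cases \textbf{(b)} and \textbf{(c)} are identical in spirit: one feeds in the critical limit $S_m/\sqrt{m\log m}\dto{\cal N}_{0,1}$ for $p=3/4$, respectively the superdiffusive limit $S_m/m^{2p-1}\to L$ for $3/4<p<1$, and checks that the multiplicative factor $1+{\cal O}(m/n)$ is harmless, so that the conditional means converge to ${\cal N}_{0,1/4}$ and to $(2p-1)L$, respectively.

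The remainder is handled by a single second-moment estimate. Conditional independence gives $\var(R_n)=E\big(\var(S_n\mid\cfm)\big)=(n-m)\,E\big(1-(2p-1)^2(S_m/m)^2\big)={\cal O}(n)$, so the normalized remainder has second moment of order $m/n$ in case (a), of order $m/(n\log m)$ in case (b), and of order $m^{4-4p}/n\le m/n$ in case (c), the last inequality holding because $4-4p<1$ when $p>3/4$. All three bounds tend to $0$, so the normalized remainder vanishes in $L^2$ and therefore in probability, and Slutsky's theorem combines the two parts to produce the asserted weak limits.

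The main obstacle is not the asymptotics---which reduce to a borrowed CLT for $S_m$ plus an elementary variance bound---but making the structural claims rigorous inside the triangular scheme (\ref{mn}), in which $m=m_n$ varies with the terminal time $n$. Concretely, one must justify that the first $m_n$ steps truly follow the full-memory ERW, so that the limit theorems for $S_m$ apply along the sequence $m=m_n\to\infty$, and that freezing the memory after step $m_n$ genuinely makes the later increments conditionally i.i.d.\ given $\cfm$. Once these points are in place, the displayed estimates are routine.
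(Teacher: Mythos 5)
Your proposal is correct, and it assembles the limit by a genuinely different mechanism than the paper, although both arguments rest on the same two inputs: the conditional moment identities $E(S_n\mid\cfm)=S_m\bigl(1+(n-m)(2p-1)/m\bigr)$ and $\var(S_n\mid\cfm)=(n-m)\bigl(1-(2p-1)^2S_m^2/m^2\bigr)\le n$ (the paper's (\ref{condE}) and (\ref{condVar}), built on (\ref{uli2})), together with the borrowed limit theorems for $S_m$ from \cite{bercu} along the deterministic sequence $m=m_n\to\infty$. The paper conditions on the events $\{S_m=\ell\}$, truncates the sum over $\ell$ at $|\ell|<\varepsilon_n\sqrt{n}$ (resp.\ $\varepsilon_n\sqrt{n/m^{3-4p}}$), uses Chebyshev's inequality to replace each conditional probability by an indicator plus $o(1)$, and passes to the limit in the resulting distribution function; this forces a separate case analysis according to the sign of $2p-1$ (the inequality in the indicator flips) and a discussion of continuity points of $F_L$ in the superdiffusive case. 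Your decomposition $S_n=E(S_n\mid\cfm)+R_n$, with the normalized remainder being $O(m/n)$, $O(m/(n\log m))$ and $O(m^{4-4p}/n)$ in $L^2$ in the three regimes, followed by Slutsky's theorem applied to the normalized conditional mean $(2p-1+2(1-p)m/n)\,S_m/\sqrt{m}$ (and its analogues), yields all three limits at once, with no truncation sequence, no sign distinction, no uniformity-in-$\ell$ concern, and no continuity-point issue for $L$, since convergence in distribution plus an $o_P(1)$ perturbation is insensitive to atoms of the limit; it also covers the degenerate case $p=1/2$ of Remark \ref{p12} automatically. The structural point you flag as the main obstacle---that in the triangular scheme the first $m_n$ steps follow the full-memory ERW and the steps $X_{m+1},\dots,X_n$ are conditionally i.i.d.\ given $\cfm$---is precisely the modelling assumption the paper's own computations rely on, so you are on equal footing there; note moreover that conditional uncorrelatedness, which follows from $E(X_kX_\ell\mid\cfm)=(2p-1)^2S_m^2/m^2$ for $m<k<\ell$, already suffices for your variance bound, so full conditional independence is not even needed. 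Both routes break down identically when $m/n\to\alpha\in(0,1]$ (the conditional fluctuations no longer vanish, cf.\ Section 6), so your shortcut loses nothing in scope while being noticeably cleaner.
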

\begin{remark}\label{p12}\emph{
If $p=1/2$, then  $S_n/\sqrt{n} \dto {\cal N}_{0,1}$ as $\nifi$ (recall Remark \ref{rem31}). Note also that (a) reduces to $S_n\sqrt{m}/n\pto0$ as $\nifi$.}\vsb
\end{remark}

\subsection{Proofs }
\pf{Proposition \ref{moments}} Let $n>m$. Then
\[E(X_n \,\big|\, {\cal F}_{n-1})=E(X_n\,\big|\, {\cal F}_{m})=(2p-1)\frac{S_m}{m}, \]
and thus
\[E(X_n)=(2p-1)E\Big(\frac{S_m}{m}\Big)\,.\]
This leads to
\begin{equation}\label{condE}E(S_n \,\big|\, {\cal F}_{n-1})=E(S_n \,\big|\, {\cal F}_{m})=S_m +(n-m)(2p-1)\,\frac{S_m}{m}.
\end{equation}
Finally, using that  $E(S_m)=\frac{(2p-1)\Gamma(2p+m-1)}{\Gamma(m)\Gamma(2p)} $ (cf.\ Section 2 in \cite{bercu}), we find that
\begin{eqnarray}\label{m1}
 E(S_n)&=&n(2p-1) \, E(\frac{S_m}{m})+ 2m(1-p)E(\frac{S_m}{m})\nonumber \\
&=& n\frac{(2p-1)\Gamma(m+2p-1)}{\Gamma(m+1)\Gamma(2p)} \Big((2p-1) +2(1-p)\frac{m}{n}\Big)\,,
\end{eqnarray}
where we note that $E(S_m/m)={\cal O}(m^{2p-2})\,.$ \\[1.5mm]

Next, for $m<k<l$,
\[ E(X_k X_l \mid {\cal F}_m)= E(E(X_k X_l\mid {\cal F}_m \cup \sigma(X_k)) \mid{\cal F}_m)
= E(X_k E( X_l\mid{\cal F}_m ) \mid{\cal F}_m)=(2p-1)^2 \frac{S_m^2}{m^2}.\]
For $n>m$, this tells us that
\begin{eqnarray}\label{condVar}
\hskip-2pc E(S_n^2 \mid {\cal F}_m)& =&S_m^2 + 2S_m \sum_{k=m+1}^n E(X_k\mid {\cal F}_m)+\sum_{k=m+1}^n E(X_k^2\mid {\cal F}_m)\nonumber\\
&& \hskip2pc+2\sum_{m+1\le k<\ell\le n}E(X_kX_\ell\, \mid{\cal F}_m)\nonumber\\
&=&\frac{S_m^2}{m^2}\big( m^2+2m(2p-1)(n-m)+(2p-1)^2(n-m)(n-m-1)\big)+(n-m)\nonumber\\
&=& \frac{n^2}{m}\Big(\frac{S_m}{\sqrt{m}}\Big)^2\Big((2p-1)^2-(2p-1)^2\frac{2m+1}{n}+(2p-1)^2\frac{m(m+1)}{n^2}\nonumber\\
&& \hskip2pc +2(2p-1)\Big(\frac{m}{n}-\frac{m^2}{n^2}\Big)+\frac{m^2}{n^2}\Big) + n-m\,.
\end{eqnarray}
For part (a) we thus assume that $0<p<3/4$. Using (\ref{m1}) we obtain, as $\nifi$,
\begin{eqnarray*}
E\Big(\frac{S_n \sqrt{m}}{n}\Big)&=&(2p-1)E\Big(\frac{S_m}{\sqrt{m}}\Big)+2(1-p)\frac{m}{n}E\Big(\frac{S_m}{\sqrt{m}}\Big)={\cal O}\big(m^{2p-3/2}\big)\to 0 \\
E\Big(\Big(\frac{S_n\sqrt{m}}{n}\Big)^2\Big)&=&E\Big(\Big(\frac{S_m}{\sqrt{m}}\Big)^2\Big)\Big((2p-1)^2+(2p-1){\cal O}\big(\frac{m}{n})\Big)\Big) + \frac{m}{n} +o\big(\frac{m}{n}\big).\end{eqnarray*}
Finally, since $\var(S_m/\sqrt{m})\sim 1/(3-4p)$ as $\nifi$ (see \cite{bercu}), it follows that
\[
\var\Big(\frac{S_n\sqrt{m}}{n}\Big)=(2p-1)^2\var\Big(\frac{S_m}{\sqrt{m}}\Big)+ {\cal O}\Big(\frac{m}{n}\Big)\sim\frac{(2p-1)^2}{3-4p}\ttt{as}\nifi,\] 
where an analysis of the calculations above shows that $\var( (\frac{S_n\sqrt{m}}{n}))\sim\frac{m}{n}$  if $p=1/2$.

For part (b)  we use Theorem 3.6 in \cite{bercu} and the calculations above to see that, asymptotically,
\beaa E\Big(\frac{S_n \sqrt{m/\log m}}{n}\Big)&=& \frac12 E(S_m/\sqrt{m\log m}) (1+o(1))= {\cal O}(1/\sqrt{\log m}),\\
 \var\Big(\frac{S_n \sqrt{m/\log m}}{n}\Big)&=&\frac14 \var\Big(\frac{S_m}{ \sqrt{m\log m}}\Big)+o(1)\sim \frac14\,.
\eeaa
The proof of (c) follows along the same lines, together with Theorem 3.7 in \cite{bercu}, where it was shown that  $S_m/m^{2p-1}\stackrel{a.s.}{\to} L$ and $\var(S_m/m^{2p-1}) \to \var(L)$ as $\nifi$, with the random variable $L$ as before. \vsb

Next in turn is the proof of Theorem \ref{CLT}.\\[1.5mm]
\pf{Theorem \ref{CLT}} 
We begin with part (a). Using  (\ref{condE}) we obtain, for $-m\le \ell\le m$,
\[E\Big(\frac{S_n\sqrt{m}}{n}\,\Big| \, S_m=\ell\Big)= \frac{\ell}{\sqrt{m}}\Big((2p-1)+2(1-p)\frac{m}{n}\Big),\]
and from (\ref{condVar})
\[E\Big(\Big(\frac{S_n\sqrt{m}}{n}\Big)^2\,\Big| \, S_m=\ell\Big)=\frac{\ell^2}{m}\Big((2p-1)^2
+{\cal O}\Big(\frac{m}{n}\Big)\Big)+o(1),\]
which in turn, assuming that $\ell =o(\sqrt{n})$, yields
\[\var\Big(\frac{S_n\sqrt{m}}{n}\,\Big| \, S_m=\ell\Big)= {\cal O}\Big( \frac{\ell^2}{n}\Big)= o(1)\,.        \]
Hence, with $\veps_n\searrow 0$ but $\veps^2_n n/m\to \infty$, assuming to begin with that $(2p-1)>0$, and using Chebyshev's inequality, we get
\begin{eqnarray*}
\lefteqn{P\Big( \frac{S_n\sqrt{m}}{n}\le x\Big)}\\&=&\sum_{\ell=-m}^{m} P\Big( \frac{S_n\sqrt{m}}{n}\le x\,\big|\, S_m=\ell\Big)P(S_m=\ell)\\
&=& \sum_{|\ell|<\veps_n\sqrt{n}} P\Big(\frac{S_n\sqrt{m}}{n}-(2p-1)\frac{\ell}{\sqrt{m}}(1+o(1))\le x-(2p-1)\frac{\ell}{\sqrt{m}}(1+o(1))\Big)\times \\&&\hspace*{.5cm}  \times P\Big(\frac{S_m}{\sqrt{m}}=\frac{\ell}{\sqrt{m}}\Big) +\sum_{m\ge|\ell l>\veps_n \sqrt{n}}P\Big( \frac{S_n\sqrt{m}}{n}\le x\,\big|\, S_m=\ell\Big)P(S_m=\ell)\\
&=& \sum_{|\ell|<\veps_n\sqrt{n}}  \big(\ind\{(2p-1)\frac{\ell}{\sqrt{m}}(1+o(1))\le x\}+o(1)\big)\, P\Big(\frac{S_m}{\sqrt{m}}=\frac{\ell}{\sqrt{m}}\Big) +R_n\\
&=& P\Big( -\frac{\veps_n\sqrt{n}}{\sqrt{m}}\le \frac{S_m}{\sqrt{m}}\le \frac{x}{2p-1}\big(1+o(1)\big) \Big)+o(1)+R_n\\
& \to &  {\cal N}_{0,1/(3-4p)}(x/(2p-1))={\cal N}_{0,\frac{(2p-1)^2}{3-4p}}(x)\,,
\end{eqnarray*}
since $\frac{S_m}{\sqrt{m}} \dto {\cal N}_{0,1/(3-4p)}$ and $R_n\le P\big(\big|\frac{S_m}{\sqrt{m}}\big|>\frac{\veps_n\sqrt{n}}{\sqrt{m}}\big)=o(1)$ as $\nifi$.

The case  $2p-1<0$ follows similarly, but the inequality in the indicator function is changing. As the Gaussian distribution is symmetric, we end with the desired result.

Part (b) follows via similar arguments: For $-m\le \ell\le m$,
\[E\Big(\frac{S_n m^{2(1-p)}}{n}\,\Big| \, S_m=\ell\Big)= \frac{\ell}{m^{2p-1}}\Big((2p-1)+2(1-p)\frac{m}{n}\Big)\,.\]
and 
\[E\Big(\Big(\frac{S_n m^{2(1-p)}}{n}\Big)^2\,\Big| \, S_m=\ell\Big)=\frac{\ell^2}{m^{2(2p-1)}}\Big((2p-1)^2+{\cal O}\Big(\frac{m}{n}\Big)\Big)+o(1),\]
which in turn, assuming that $\ell =o\big(\sqrt{n/m^{3-4p}}\big)$, yields
\[\var\Big(\frac{S_n m^{2(1-p)}}{n}\,\Big| \, S_m=\ell\Big)= {\cal O}\Big( \frac{\ell^2 m^{3-4p}}{n}\Big)= o(1)\,.        \]
Hence, with $\veps_n\searrow 0$ but $\veps_n^2 n/m\to \infty$ and $2p-1>0$, the same reasoning tells us that
\begin{eqnarray*}
\lefteqn{P\Big( \frac{S_n m^{2(1-p)}}{n}\le x\Big)}\\&=&\sum_{\ell=-m}^{m} P\Big( \frac{S_n m^{2(1-p)}}{n}\le x\,\big|\, S_m=\ell\Big)P(S_m=\ell)\\
&=& \sum_{|\ell|<\veps_n \sqrt{n/m^{3-4p}} } \Big(\ind\{(2p-1)\frac{\ell}{m^{2p-1}}(1+o(1))\le x\}+o(1)\Big)\, P\Big(\frac{S_m}{m^{2p-1}}=\frac{\ell}{m^{2p-1}}\Big) \\[1mm]
&& +R_n\\[1mm]
&=& P\Big( -\veps_n\,\sqrt{\frac{n}{m}}\le \frac{S_m}{m^{2p-1}}\le \frac{x}{2p-1}\big(1+o(1)\big) \Big)+o(1)+R_n\\
& \to &  F_L(x/(2p-1)\pm)\stackrel{d}{=}(2p-1)\,F_L(x\pm)\,,
\end{eqnarray*}
where we use Theorem 3.7 in \cite{bercu} and the fact that $R_n \le P\Big( \Big|\frac{S_m}{m^{2p-1}}\Big| > \veps_n \sqrt{\frac{n}{m}}\Big) \to 0 \ttt{as} \nifi$. Note that we do not know whether $L$ is a continuous random variable or not, but this does not affect the conclusion.

Part (c) runs along the same lines, noticing that $\frac{S_m}{\sqrt{m\log m }}\asto {\cal N}_{0,1}$ as $\nifi$.\vsb

\section{ERW:s with delays}
\setcounter{equation}{0}
In this section we introduce the possibility of delays, in that the elephant, in addition, has a choice of staying put in every step. Our aim is  to extend the results above to this three-point case. More precisely, the steps are defined via
\bea\label{23}\xnp=\begin{cases} +X_{K},\ttt{with probability} p\in[0,1], \\-X_{K},\ttt{with probability} q\in[0,1],\\\phantom{+)}0,\ttt{\,\,  with probability}r\in[0,1],\end{cases}\eea
where  $p+q+r=1$, and where $K$ has a uniform distribution on the integers $1,2,\ldots,n$.  Everything reduces, of course, to our previous results if if $r=0$.
We refer to the paper \cite{113} for results in the classical case, as well as to cases with finite memories.\\

\begin{proposition}\label{delmom}
Let $m,n\to\infty$ in such a way that $m/n\to0$.\\[1mm]
\textbf{\emph{(a)}} If $ p-q<1/2$, then
\beaa
E(\frac{S_n\sqrt{m}}{n})&=&(p-q)E(\frac{S_m}{\sqrt{m}})+(1+q-p)\frac{m}{n} E(\frac{S_m} {\sqrt{m}})\to 0,\\
\var(\frac{S_n\sqrt{m}}{n})&=&(p-q)^2 \var(\frac{S_m}{\sqrt{m}})
+{\cal O}\Big(\frac{m}{n}\Big)=\frac{(p^2-q^2)^2}{1-2(p-q)} +o(1).
\eeaa
\textbf{\emph{(b)}} If $p-q=1/2$, then 
\beaa
E(\frac{S_n\sqrt{m/\log m}}{n})&=&(p-q)E(\frac{S_m}{\sqrt{m \log m}})+(1+q-p)\frac{m}{n} E(\frac{S_m} {\sqrt{m\log m}})\to 0,\\
\var(\frac{S_n\sqrt{m/ \log m}}{n})&=&(p-q)^2 \var(\frac{S_m}{\sqrt{m \log m}})
+{\cal O}\Big(\frac{m}{n\log m}\Big)=\frac14 (p+q)^2 +o(1).
\eeaa
\textbf{\emph{(c)}} If $p-q>1/2$, then
\beaa
E(\frac{S_n\,m^{(1+q-p)}}{n})&=&(p-q)E(\frac{S_m}{m^{p-q}})+(1+q-p)\frac{m}{n} E(\frac{S_m }{m^{p-q}})\to (p-q) E(L),\\
\var(\frac{S_n\, m^{(1+q-p)}}{n})&=&(p-q)^2 \var(\frac{S_m}{m^{p-q}})+{\cal O}\Big(\frac{m}{n}\Big)\sim (p-q)^2\var(L),
\eeaa
where the random variable  $L$ is defined in Theorem 4.1 of \cite{113} with moments
\[E(L)=\frac{(p-q)}{\Gamma(1+p-q)} \ttt{and} \var(L)=\frac{(p+q)^2}{(2(p-q)-1)\Gamma((2(p-q))}-\frac{(p-q)^2}{\Gamma^2(1+p-q)}\,.\] 
\end{proposition}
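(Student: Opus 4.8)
The plan is to follow the proof of Proposition~\ref{moments} essentially verbatim, replacing the coin‑flip factor $(2p-1)$ throughout by $(p-q)$ and tracking the single genuinely new feature of the delayed model: a remembered step may now equal $0$, so $X_k^2$ is no longer identically $1$. Throughout I take $n>m$ and condition on $\cfm$.

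First I would record the one‑step mean recursion. By the first equality in~(\ref{uli}) one has $E(X_n\mid\cfnm)=E(X_n\mid\cfm)=(p-q)S_m/m$, so that, exactly as in~(\ref{condE}),
\[E(S_n\mid\cfm)=S_m+(n-m)(p-q)\frac{S_m}{m}.\]
Taking expectations and inserting the first‑moment formula for the full‑memory delayed walk from Theorem~4.1 of~\cite{113} gives $E(S_n)=n(p-q)E(S_m/m)+m(1+q-p)E(S_m/m)$, in which $1+q-p=1-(p-q)$ plays the role previously played by $2(1-p)$. Multiplying $E(S_n)$ by the relevant scaling factor ($\sqrt m/n$, $\sqrt{m/\log m}/n$, or $m^{1+q-p}/n$) and invoking the order of $E(S_m/m)$ from~\cite{113} then yields the stated mean asymptotics in each of the three regimes.

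For the second moment I would assemble $E(S_n^2\mid\cfm)$ as in~(\ref{condVar}). The cross terms carry over unchanged: for $m<k<\ell$ the tower property together with~(\ref{uli2}) gives $E(X_kX_\ell\mid\cfm)=(p-q)^2 S_m^2/m^2$. The only structural change is the diagonal. Since a remembered step can vanish, $E(X_k^2\mid\cfm)=(p+q)\sum_{i\in\mem}X_i^2/m$ for $k>m$, so that $\sum_{k=m+1}^n E(X_k^2\mid\cfm)=(n-m)(p+q)\sum_{i\in\mem}X_i^2/m$. After multiplying by the variance scaling $m/n^2$ and taking expectations, this diagonal contribution equals $\frac{(n-m)}{n^2}(p+q)\,E(\sum_{i\in\mem}X_i^2)$, which is ${\cal O}(m/n)$ because $E(\sum_{i\in\mem}X_i^2)\le m$; in the critical case the extra $\log m$ in the normalization turns this into ${\cal O}(m/(n\log m))$, as claimed in part~(b). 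Thus the diagonal is absorbed into the error term and the leading behaviour is $(p-q)^2\var(S_m/\sqrt m)+{\cal O}(m/n)$, and likewise for the other two normalizations.

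Finally I would close each regime by importing the corresponding full‑memory delayed limit results from~\cite{113}. In part~(a) one uses $\var(S_m/\sqrt m)\to(p+q)^2/(1-2(p-q))$, whence $(p-q)^2\var(S_m/\sqrt m)\to(p-q)^2(p+q)^2/(1-2(p-q))=(p^2-q^2)^2/(1-2(p-q))$; the transition now occurs at $p-q=1/2$ rather than $p=3/4$. In part~(b), at the critical value $p-q=1/2$, one uses the $\log m$ normalization and $\var(S_m/\sqrt{m\log m})\to(p+q)^2$, giving $\frac14(p+q)^2$. In part~(c), with $p-q>1/2$, one uses $S_m/m^{p-q}\asto L$ and $\var(S_m/m^{p-q})\to\var(L)$ for the limit variable $L$ of Theorem~4.1 of~\cite{113}. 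The main obstacle is bookkeeping rather than conceptual: one must verify that the new diagonal term is genuinely lower order in all three regimes (in particular that it does not disturb the delicate $\log m$ balance at the critical value), and that the threshold $p-q=1/2$ together with the three normalizations matches the correspondingly renormalized moment estimates taken from~\cite{113}.
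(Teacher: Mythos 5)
Your proposal follows the paper's proof essentially verbatim: the same conditional-moment recursions $E(S_n\mid\cfm)=\frac{S_m}{m}\big(n(p-q)+m(1+q-p)\big)$ and the same expansion of $E(S_n^2\mid\cfm)$ with cross terms $E(X_kX_\ell\mid\cfm)=(p-q)^2S_m^2/m^2$, with $2p-1$ replaced by $p-q$ throughout and the full-memory moment asymptotics imported from Theorem 4.1 of \cite{113}, exactly as the paper does. Your handling of the diagonal term is in fact slightly more careful than the paper's, which writes $\sum_{k=m+1}^n E(X_k^2\mid\cfm)=(n-m)(p+q)$ (implicitly treating every remembered step as nonzero), whereas you correctly use $E(X_k^2\mid\cfm)=(p+q)\sum_{i\in\memn}X_i^2/m$ and bound it; either way this contribution is absorbed into the ${\cal O}(m/n)$ (resp.\ ${\cal O}(m/(n\log m))$) error term, so the two arguments coincide.
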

\begin{remark}\emph{If $p=q$ then $\var(S_n/\sqrt{n})\to 1$ as $\nifi$.}\vsb
\end{remark}
Furthermore we have the following asymptotic distributions.
\begin{theorem}\label{CLT2}\textbf{\emph{(a)}} If $p-q<1/2$, then
\[ \frac{S_n \sqrt{m}}{n}\dto (p+q){\cal N}_{0,\frac{(p-q)^2(p+q)}{1-2(p-q)}}+r \delta_0 \ttt{as} \nifi\,.\]
\textbf{\emph{(b)}} If $p-q=1/2$, then 
\[ \frac{S_n \sqrt{m/\log m}}{n}\dto (p+q){\cal N}_{0,\frac14 (p+q)}+r \delta_0 \ttt{as} \nifi\,.\]
\textbf{\emph{(c)}} If $p-q>1/2$, then
\[ \frac{S_n m^{1+q-p}}{n}\dto (p-q) L  \ttt{as} \nifi,\]
with the random variable $L$ as above.
\end{theorem}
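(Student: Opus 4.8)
The plan is to follow the proof of Theorem \ref{CLT} line for line, replacing $2p-1$ by $p-q$ throughout and importing the limit law of the full-memory \emph{delayed} walk from \cite{113} in place of the one borrowed from \cite{bercu}. The structural fact that makes this work is that, by (\ref{uli2}), once the memory has been frozen at the first $m$ steps the increments $X_{m+1},X_{m+2},\dots$ are, conditionally on $\cfm$, independent and identically distributed, each equal to $+X_i$ with probability $p/m$, to $-X_i$ with probability $q/m$ $(1\le i\le m)$, and to $0$ with probability $r$. Writing $T_m=\sum_{i=1}^{m}X_i^2$ for the number of non-vanishing steps in the memory, this yields the delayed analogues of (\ref{condE}) and (\ref{condVar}), namely $E(S_n\mid\cfm)=S_m+(n-m)(p-q)\,S_m/m$ and $\var(S_n\mid\cfm)=(n-m)\big((p+q)T_m/m-(p-q)^2S_m^2/m^2\big)$. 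These are precisely the identities behind Proposition \ref{delmom}, whose moment asymptotics I therefore take as given.

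Next I would condition on $S_m=\ell$. The mean formula gives $E\big(S_n\sqrt{m}/n\mid S_m=\ell\big)=(\ell/\sqrt{m})\big((p-q)+(1+q-p)\,m/n\big)$, with leading term $(p-q)\ell/\sqrt{m}$. The decisive simplification is in the variance: since $T_m\le m$, one has $\var(S_n\mid\cfm)\le(n-m)(p+q)$ for \emph{every} realization of the memory, and because $E(S_n\mid\cfm)$ depends on $\cfm$ only through $S_m$, the between-classes term in the total-variance decomposition drops out. Hence $\var\big(S_n\sqrt{m}/n\mid S_m=\ell\big)\le(p+q)\,m/n\to0$ uniformly in $\ell$, so that, conditionally on $S_m=\ell$, the quantity $S_n\sqrt{m}/n$ concentrates at $(p-q)\ell/\sqrt{m}$ for \emph{all} $\ell$, in particular for $\ell$ near $0$.

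With $\veps_n\searrow0$ but $\veps_n^2n/m\to\infty$, and assuming first $p-q>0$ (the case $p-q<0$ being symmetric as in Theorem \ref{CLT}(a)), the Chebyshev decomposition from the proof of Theorem \ref{CLT}(a) then goes through unchanged: splitting the sum over $\ell$ at $|\ell|=\veps_n\sqrt{n}$, the tail is bounded by $P\big(|S_m/\sqrt{m}|>\veps_n\sqrt{n/m}\big)=o(1)$, while in the bulk the conditional probabilities may be replaced by $\ind\{(p-q)\ell/\sqrt{m}\le x\}$ up to $o(1)$. This identifies the limit of $S_n\sqrt{m}/n$ with that of $(p-q)\,S_m/\sqrt{m}$, and by Theorem 4.1 of \cite{113} the full-memory delayed walk satisfies $S_m/\sqrt{m}\dto(p+q)\,{\cal N}_{0,(p+q)/(1-2(p-q))}+r\,\delta_0$ in the diffusive regime; multiplying by $p-q$ leaves the atom at $0$ fixed and rescales the Gaussian component, which is exactly part (a). Parts (b) and (c) are obtained verbatim with the scalings $\sqrt{m/\log m}$ and $m^{1+q-p}$ and the corresponding critical and superdiffusive limit laws of \cite{113} (for (c) the almost-sure convergence $S_m/m^{p-q}\asto L$, so that no separate atom appears). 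The one feature absent from Theorem \ref{CLT} is the $T_m$-term in the conditional variance, and the only genuine point to check is that it is negligible; this is immediate from $T_m\le m=o(n)$, and it is also what guarantees that the mass of $S_m/\sqrt{m}$ sitting near $0$ — the source of the atom $r\,\delta_0$ — is carried intact through the conditioning. The main obstacle is thus conceptual rather than computational: ensuring the concentration is uniform down to $\ell=0$, so that the point mass reappears with the correct weight in the limit.
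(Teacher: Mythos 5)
Your proposal is correct and follows essentially the same route as the paper's own proof: conditioning on $S_m=\ell$, showing the conditional mean is $(p-q)\frac{\ell}{\sqrt{m}}(1+o(1))$ with vanishing conditional variance, truncating the sum over $\ell$ at order $\sqrt{n}$ via a null sequence with $\varepsilon_n^2 n/m\to\infty$, and then transferring the full-memory delayed limit law of \cite{113} through the Chebyshev decomposition exactly as in Theorem \ref{CLT}, so that the atom $r\delta_0$ and the rescaled Gaussian (resp.\ $L$) emerge from the limit of $(p-q)S_m/\sqrt{m}$. Your explicit conditional-variance identity involving $T_m$ and the resulting uniform bound $\var(S_n\mid S_m=\ell)\le (n-m)(p+q)$ is a modest sharpening of the paper's ${\cal O}(\ell^2/n)$ estimate (which requires $\ell=o(\sqrt{n})$), but since you retain the same truncation anyway, the argument is the paper's own.
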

\begin{remark}\label{pgleichq}
\emph{If $p=q$ then we have $\frac{S_n}{\sqrt{n}} \dto 2p\,{\cal N}_{0,1}+(1-2p)\delta_0$ as $\nifi$.}\vsb
\end{remark}
\subsection{Proofs}

\pf{Proposition \ref{delmom}} For $n>m$,
\[ E(X_n\,\big|\, \cfm)=\frac{(p-q)S_m}{m}\,,\]
and so
\[ E(S_n\,\big|\, \cfm)=S_m+(n-m)\frac{(p-q)S_m}{m}=\frac{S_m}{m}\big(n(p-q)+m(1+q-p)\big)\,.\]
As before we learn that, for $k,\ell>m$,
\[ E(X_kX_\ell \,\big|\, \cfm)=\frac{(p-q)^2S^2_m}{m^2}\,.\]
Thus,
\begin{eqnarray*}
E(S_n^2\,\big|\, \cfm)&=&S_m^2 + 2S_m \sum_{k=m+1}^n E(X_k\, \big|\, {\cal F}_m)+\sum_{k=m+1}^n E(X_k^2\, \big|\, {\cal F}_m)\\
&& \hskip2pc+2\sum_{m+1\le k<\ell\le n}E(X_kX_\ell\, \big|\, {\cal F}_m)\\
&=&\frac{S_m^2}{m^2}\Big( m^2+2m(p-q)(n-m)+(p-q)^2(n-m)(n-m-1)\Big)+(n-m)(p+q)\\
&=& \frac{n^2}{m}\Big(\frac{S_m}{\sqrt{m}}\Big)^2\Big((p-q)^2-(p-q)^2\frac{2m+1}{n}+(p-q)^2\frac{m(m+1)}{n^2}\\
&&\hskip2pc  +2(p-q)\Big(\frac{m}{n}-\frac{m^2}{n^2}\Big)+\frac{m^2}{n^2}\Big) +\frac{n^2}{m}(p+q)\Big(\frac{m}{n} -\frac{m^2}{n^2}\Big).
\end{eqnarray*}
The remaining calculations follow as in the proofs above for the ordinary ERW, together with the results in \cite{113}.\vsb

Next we proceed with the proof of Thm.\ref{CLT2}.

\pf{Theorem \ref{CLT2}} 
We deal with part (a) and obtain, for $-m\le \ell\le m$,
\[E\Big(\frac{S_n\sqrt{m}}{n}\,\big| \, S_m=\ell\Big)= \frac{\ell}{\sqrt{m}}\Big((p-q)+(1+q-p)\frac{\sqrt{m}}{n}\Big)\,.\]
and 
\[E\Big(\Big(\frac{S_n\sqrt{m}}{n}\Big)^2\,\big| \, S_m=\ell\Big)=\frac{\ell^2}{m}\Big((p-q)^2+{\cal O}\Big(\frac{m}{n}\Big)\Big)+o(1),\]
which, assuming that $\ell =o(\sqrt{n})$, in turn, yields
\[\var\Big(\Big(\frac{S_n\sqrt{m}}{n}\Big)^2\,\big| \, S_m=\ell\Big)= {\cal O}\Big( \frac{\ell^2}{n}\Big)= o(1)\,.        \]
Next, let $\veps_n\searrow 0$, such that $m/(\veps^2_n n)\to 0$, and assume for the moment that $p-q>0$. Using Chebyshev's inequality yields
\begin{eqnarray*}
P\Big( \frac{S_n\sqrt{m}}{n}\le x\Big)&=&\sum_{\ell=-m}^{m} P\Big( \frac{S_n\sqrt{m}}{n}\le x\,\big|\, S_m=\ell\Big)P(S_m=\ell)\\
&=& \sum_{|\ell|<\veps_n\sqrt{n}} P\Big(\frac{S_n\sqrt{m}}{n}-(p-q)\frac{\ell}{\sqrt{m}}(1+o(1))\le x-(p-q)\frac{\ell}{\sqrt{m}}(1+o(1))\Big) \\
&&\hskip2pc  \times P\Big(\frac{S_m}{\sqrt{m}}=\frac{\ell}{\sqrt{m}}\Big) 
+\sum_{\veps_n\sqrt{n}<|\ell|\leq m }P\Big( \frac{S_n\sqrt{m}}{n}\le x\,\big|\, S_m=\ell\Big)P(S_m=\ell)\\
&=& \sum_{|\ell|<\veps_n\sqrt{n}}  \big(\ind\{(p-q)\frac{\ell}{\sqrt{m}}(1+o(1))\le x\}+o(1)\big)\, P\Big(\frac{S_m}{\sqrt{m}}=\frac{\ell}{\sqrt{m}}\Big) +R_n\\
&=& P\Big( -\frac{\veps_n\sqrt{n}}{\sqrt{m}}\le \frac{S_m}{\sqrt{m}}\le \frac{x}{p-q}\big(1+o(1)\big) \Big)+o(1)+R_n\\[1.5mm]
& \to &  (p+q){\cal N}_{0,(p+q)/(1-2(p-q))}(x/(p-q))+\delta_0(x/(p-q))\\[1.5mm]
&=&(p+q){\cal N}_{0,\frac{(p-q)^2(p+q)}{1-2(p-q)}}(x)+r\delta_0(x)\,,
\end{eqnarray*}
since, as $\nifi$,
\[\frac{S_m}{\sqrt{m}} \dto(p+q) {\cal N}_{0,(p+q)/(1-2(p-q))}+\delta_0(x)\ttt{and}R_n\le P\big(\big|\frac{S_m}{\sqrt{m}}\big|>\frac{\veps_n\sqrt{n}}{\sqrt{m}}\big)=o(1).\] 
The other cases follow along the same arguments.\vsb

The situation in Remark \ref{pgleichq} is comparable to that of Remark \ref{p12}, and the result follows from the classical CLT if $X_1=\pm 1$, and the fact that  $S_n=0\,,\; \forall n$ if $X_1=0$.

\subsection{The number of zeros in ERW:s with delays}
In an ERW with delays the number of zeros is dominating. Technically it is easier to investigate the number of non-zeros.
Therefore, let
\[ I^*_n=\ind_{\{X_n\not= 0\}}  \ttt{and}  N^*_n=\sum_{k=1}^n I^*_k\,.\]
The total number of zeros obiviously equals $N_n=n-N_n^*$. For $N_n^*$ we have the following moments.
\begin{proposition}\label{non0}
For $0<r<1$ we have, as $\nifi$,
\beaa
 E\Big(\frac{N_n^* m^r}{n}\Big)&=& \frac{m^r \Gamma(m+1-r)}{\Gamma(1-r)\Gamma(m+1)}\big( (1-r)+mr/n)
\sim \frac{1-r}{\Gamma(1-r)}+{\cal O}(m/n)+{\cal O}(1/m);\\
 E\Big(\Big(\frac{N_n^* m^r}{n}\Big)^2\Big)&=& (1-r)^2\,d_r+{\cal O}(m/n),
\eeaa
where the constant $d_r$ was defined in \cite{114}.
\end{proposition}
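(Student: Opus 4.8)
The plan is to run the same template as in the proofs of Propositions~\ref{moments} and \ref{delmom}, but with the signed increments $X_k$ replaced by the indicators $I_k^*$ and with $S_m$ replaced by $N_m^*=\sum_{i=1}^m I_i^*$, the number of non-zeros frozen inside the memory. The starting point is that for $n\ge m$ the step $\xnp$ draws $K$ uniformly from $\{1,\dots,m\}$ and then applies the non-delay coin, so that $X_{n+1}\neq 0$ precisely when $X_K\neq 0$ \emph{and} the coin is not a delay. Hence, for $n\ge m$,
\[
E(I_{n+1}^*\mid\cfm)=(1-r)\,\frac{N_m^*}{m}\,,
\]
and, crucially, conditionally on $\cfm$ the variables $\{I_k^*:k>m\}$ are \emph{independent} $\mathrm{Bernoulli}\big((1-r)N_m^*/m\big)$: each uses its own independent draw of $K$ and its own coin, while referring only to the fixed memory. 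This conditional independence plays the role of the product formula $E(X_kX_\ell\mid\cfm)=(2p-1)^2S_m^2/m^2$ used earlier and makes all cross terms factorize.

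For the first moment I would sum the conditional expectation to obtain
\[
E(N_n^*\mid\cfm)=N_m^*+(n-m)(1-r)\frac{N_m^*}{m}=N_m^*\,\frac{n(1-r)+mr}{m}\,,
\]
take expectations, and insert the exact full-memory mean $E(N_m^*)=\Gamma(m+1-r)/\big(\Gamma(1-r)\Gamma(m)\big)$. The latter follows from the recursion $E(N_k^*)=E(N_{k-1}^*)(k-r)/(k-1)$ with the initial value $E(N_1^*)=1-r$, which telescopes to the stated Gamma-function quotient. Multiplying by $m^r/n$ and using $m\,\Gamma(m)=\Gamma(m+1)$ reproduces the first identity exactly, and Stirling's formula $\Gamma(m+1-r)/\Gamma(m)\sim m^{1-r}$ delivers the limit $(1-r)/\Gamma(1-r)$ together with the indicated $\mathcal{O}(m/n)$ and $\mathcal{O}(1/m)$ corrections.

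For the second moment I would expand $E\big((N_n^*)^2\mid\cfm\big)$ into the four sums as in (\ref{condVar}), using $(I_k^*)^2=I_k^*$ on the diagonal and the conditional independence off it, to get
\[
E\big((N_n^*)^2\mid\cfm\big)=(N_m^*)^2\Big(\tfrac{n(1-r)+mr}{m}\Big)^2-(N_m^*)^2\,(n-m)\frac{(1-r)^2}{m^2}+(n-m)(1-r)\frac{N_m^*}{m}\,.
\]
After taking expectations and multiplying by $m^{2r}/n^2$, the expansion $\big(n(1-r)+mr\big)^2/n^2=(1-r)^2+\mathcal{O}(m/n)$ shows that the leading contribution is $(1-r)^2\,m^{2r-2}E\big((N_m^*)^2\big)$, while the remaining two summands are $\mathcal{O}(1/n)$ and $\mathcal{O}(m^r/n)=\mathcal{O}(m/n)$, respectively. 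The proof then closes by importing from \cite{114} the full-memory statement that $N_m^*/m^{1-r}$ converges in $L^2$, whence $m^{2r-2}E\big((N_m^*)^2\big)\to d_r$ and the whole expression tends to $(1-r)^2 d_r$ with error $\mathcal{O}(m/n)$.

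The bookkeeping is routine once the conditional independence is in place, so the main obstacle is securing the two full-memory inputs rather than the manipulations above: the exact mean $E(N_m^*)$, obtained here by the telescoping recursion, and, more delicately, the normalized second moment $m^{2r-2}E\big((N_m^*)^2\big)\to d_r$, which must be taken from \cite{114}. I would pay particular attention to the initial condition $E(N_1^*)=1-r$, on which the Gamma-constant hinges, and would check that the convergence-rate error in $m^{2r-2}E\big((N_m^*)^2\big)-d_r$ is genuinely dominated by $\mathcal{O}(m/n)$ (or else record an additional $\mathcal{O}(1/m)$ as in the first moment formula).
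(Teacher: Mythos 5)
Your proposal is correct and takes essentially the same route as the paper: conditioning on $\cfm$, computing the first and second conditional moments (your conditional-i.i.d.\ Bernoulli/binomial packaging of $N_n^*-N_m^*$ is algebraically identical to the paper's four-sum expansion with the product formula $E(I_k^*I_\ell^*\mid\cfm)=(1-r)^2(N_m^*/m)^2$), and then importing the full-memory inputs $E(N_m^*)=\Gamma(m+1-r)/\big(\Gamma(1-r)\Gamma(m)\big)$ (which the paper cites as formula (3.1) of \cite{114} and you re-derive by the same telescoping recursion) and $E\big((N_m^*)^2\big)\sim d_r\,m^{2(1-r)}$ from \cite{114}. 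Your closing caveat is fair but does not mark a divergence: the paper's own proof likewise invokes the asymptotics from \cite{114} without verifying that the error $m^{2r-2}E\big((N_m^*)^2\big)-d_r$ is dominated by ${\cal O}(m/n)$.
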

\begin{proof}  For $n>m$ we find that
\[ E(N_n^*\mid \cfn)=E(N_n^*\mid \cfm)=N_m^*+\sum_{k=m+1}^n E(I_k^*\mid \cfm)=\frac{N_m^*}{m}\big(n(1-r)+mr\big).\]
Exploiting formula (3.1) of \cite{114} yields
\[E(N_n^*)=\frac{\Gamma(m+1-r)}{\Gamma(1-r)\Gamma(m+1)}\big(n(1-r)+mr\big)\sim m^{-r}n \Big(\frac{1-r}{\Gamma(1-r)}+r \frac{m}{n}+{\cal O}\big(m^{-1}\big)\Big)\,,\]
which establishes the first relation.

As for the second moment,
\begin{eqnarray*}
\lefteqn{E((N_n^*)^2\mid \cfn)= E((N_n^*)^2\mid \cfm)=}\\
&=&(N_m^*)^2+2N_m^*\sum_{k=m+1}^n E(I_k^*\mid \cfm) +\sum_{k=m+1}^n E((I_k^*)^2\mid \cfm)+ 2 \sum_{m<k<\ell\le n} E(I_k^*I_\ell^*\mid\cfm\, )\\
&=&(N_m^*)^2+ 2(1-r)(n-m) \frac{(N_m^*)^2}{m}+(1-r)(n-m)  \frac{N_m^*}{m}\\[1.4mm]
&&\hspace*{4cm}+2(1-r)^2 \frac{(N_m^*)^2}{m^2}\frac{ (n-m)(n-m-1)}{2}\,.
\end{eqnarray*}
Consequently,
\begin{eqnarray*}
E((N_n^*)^2)&=& E\Big(\frac{(N_m^*)^2}{m^2}\Big)\Big(m^2+2(1-r)(nm-m^2)+(1-r)^2\big(n^2-n(2m+1)+m(m+1)\big)\Big) \\
&&\hskip2pc +E\Big(\frac{N_m^*}{m}\Big)(1-r)(n-m)\\
&=&E\Big(\frac{(N_m^*)^2}{m^2}\Big)\Big(\big(n+(1-r)(n-m)\big)^2-(1-r)^2(n-m)\Big)+E\Big(\frac{N_m^*}{m}\Big)(1-r)(n-m),
\end{eqnarray*}
which leads to the desired result by applying the fact that $E\big((N_n^*)^2\big)\sim d_r m^{2(1-r)}$ from \cite{114}.\vsb
\end{proof}
\begin{theorem}\label{zeros}
For $0<r<1$,
\[ \frac{S_n m^r}{n}\dto \frac{Y}{ \Gamma(1-r)}\ttt{as}\nifi,\]
where the random variable $Y$ is defined in Theorem 3.1 of \cite{114}.
\end{theorem}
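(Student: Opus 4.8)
The plan is to imitate the conditioning argument from the proofs of Theorems~\ref{CLT} and \ref{CLT2}, applied now to the non-zero count $N_n^*$, which is the quantity carried on the scale $m^r/n$ in Proposition~\ref{non0}. First I would condition on $N_m^*=\ell$, with $0\le\ell\le m$, and read off from the conditional-moment recursions already established in the proof of Proposition~\ref{non0} that
\[E\Big(\frac{N_n^*\,m^r}{n}\,\Big|\,N_m^*=\ell\Big)=(1-r)\,\frac{\ell}{m^{1-r}}\,\Big(1+{\cal O}\big(\tfrac{m}{n}\big)\Big),\]
uniformly in $\ell$. The conditional second moment of $N_n^*m^r/n$ then reproduces the square $(1-r)^2\ell^2/m^{2(1-r)}$ of this mean up to corrections of orders $\ell\,m^{2r-1}/n$ and $\ell^2 m^{2r-2}/n$, the first stemming from the diagonal sum $\sum_{k>m}E((I_k^*)^2\mid\cfm)$. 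Consequently the conditional variance is $o(1)$ for every $\ell$ below a threshold $\ell_n$ that can still be chosen to dominate the typical scale $m^{1-r}$ of $N_m^*$, the needed room being guaranteed by $m/n\to0$.

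Granting this, Chebyshev's inequality shows that, conditionally on $N_m^*=\ell$ in that range, $N_n^*m^r/n$ concentrates at $(1-r)\ell/m^{1-r}$. I would then expand $P(N_n^*m^r/n\le x)=\sum_{\ell}P(N_n^*m^r/n\le x\mid N_m^*=\ell)\,P(N_m^*=\ell)$ and split the sum at $\ell_n$. The tail $\ell>\ell_n$ is negligible because $N_m^*/m^{1-r}$ is tight, being convergent in law by \cite{114}, while in the main range the conditional probability equals $\ind\{(1-r)\ell/m^{1-r}\le x\}+o(1)$. Summing therefore returns $P\big((1-r)N_m^*/m^{1-r}\le x\big)+o(1)$, so that $N_n^*m^r/n$ inherits the limit law of $(1-r)N_m^*/m^{1-r}$.

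It then remains only to insert the distributional limit of $N_m^*/m^{1-r}$ furnished by Theorem~3.1 of \cite{114}, in which $Y$ appears as the martingale (P\'olya-type) limit of the reinforced indicator scheme governing the $I_k^*$; once the $\Gamma$-constants are matched, the prefactor $1-r$ turns this into $Y/\Gamma(1-r)$, which is the assertion. The main obstacle, exactly as in the superdiffusive cases Theorem~\ref{CLT}(c) and Theorem~\ref{CLT2}(c), is that $Y$ is not known to be continuous, so the transfer from the indicator functions to the limiting distribution function must be carried out through one-sided ($\pm$) limits at the atoms of $Y$; a subsidiary point is to render the ${\cal O}$-terms in the conditional mean and variance uniform over the range of summation, so that a single cut-off $\ell_n$ simultaneously secures the concentration and the tightness.
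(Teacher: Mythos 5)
Your proposal is correct and follows the paper's own proof essentially step for step: the same conditioning on $N_m^*=\ell$, the same conditional mean and $o(1)$ conditional-variance bounds valid below a cutoff $\ell=o\big(\sqrt{n/m^{2r-1}}\big)$ (with room above the typical scale $m^{1-r}$ because $m/n\to0$), Chebyshev concentration plus tightness of $N_m^*/m^{1-r}$ to kill the tail $R_n$, and the transfer of the limit law of Theorem 3.1 of \cite{114} via $\Gamma(2-r)=(1-r)\Gamma(1-r)$. Your explicit treatment of possible atoms of $Y$ through one-sided limits mirrors the paper's handling of the same issue for $L$ in Theorem \ref{CLT}(c).
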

\begin{proof}
As before we begin with conditional moments:
\[ E\Big( \frac{N_n m^r}{n}\Bigm| N^*_m=\ell\Big)=\frac{\ell}{m^{1-r}}\big((1-r)+rm/n\big) \ttt{and} \]
\[\var\Big( \frac{N^*_n m^r}{n}\Bigm| N^*_m=\ell \Big)={\cal O}\Big(\Big(\frac{\ell}{m^{1-r}}\Big)^2\frac{m}{n}\Big)=o(1)\,,\]
as long as $\ell =o\big(\sqrt{n/m^{2r-1}}\big)\,.$
We continue using a null-sequence $\veps_n$, such that $\veps_n^2\,n/m \to \infty $ as $\nifi$. Then
\begin{eqnarray*}
\lefteqn{P\Big( \frac{N^*_n m^r}{n}\le x\Big)=}\\&=&\sum_{k=0}^n  P\Big(\frac{N^*_n m^r}{n}\le x\Bigm|N^*_m=k\Big)P(N_m^*=k)\\
&=&\sum_{0\le k \le \veps_n \sqrt{n/m^{2r-1}} } P\Big(\frac{N^*_n m^r}{n}-\frac{k}{m^{1-r}}\,(1-r)\le x -\frac{k}{m^{1-r}}\,(1-r) \,\big|\, N^*_m=k \Big) P(N_m^*=k)+R_n\\
&=&\sum_{k/m^{1-r}\le x/(1-r)}P\Big(\frac{N_m^*}{m^{1-r}}=\frac{k}{m^{1-r}}\Big)+o(1)+R_n\\
&\to &F_Y\big(x \Gamma(2-r)/(1-r)\big)=F_Y\big(x\,\Gamma(1-r)\big)\,,
\end{eqnarray*}
in view of Theorem 3.1 in \cite{111}, and the fact that $R_n \le P \big(N_n^*/m^{1-r}> \sqrt{\veps_n^2 n/m}\big) \to 0$ as $\nifi$ by the same result.\vsb
\end{proof}

\section{Including also the last step;\quad $\memn = \{1,2,\ldots,m_n,n\}$}
\setcounter{equation}{0}
If the elephant, in addition, remembers the most recent step it turns out that the results of the previous section remain true without change. This is reasonable, since the probability that the most recent step is, indeed, the one that is chosen is $1/m\to 0$ as $\nifi$. However, since the sum of the probabilities is divergent, the second Borel--Cantelli lemma is not applicable, and some other argument must be exploited.

Toward that end, we create an ERW that selects precisely the steps that are based on the most recent step. We thus define 
\[I_k = \ind\{X_k = \pm \xkm\}\ttt{and} M_n = \min\{\ell: \sum_{k=1}^\ell I_k=n\},\]
and set
\[ Y_k=X_k\cdot\{I_k=1\} \ttt{and}T_n=\sum_{k=1}^{M_n} Y_k \ge \sum_{k=1}^n Y_k,\quad n\geq1.\]
The sequence $\{T_n,\,n\geq1\}$ thus defined has the desired property, which takes us back to Section 7 of \cite{111}, according to which $\frac{T_n}{\sqrt{n}}\dto  {\cal N}_{0, \frac{p}{1-p}}$ as $\nifi$. Since the additional factors needed for the expressions in Theorem \ref{CLT} all tend to zero as $n$ increases (and $m/n\to0$), we have established the following fact.

\begin{theorem} The conclusions of Theorem \ref{CLT} remain true for the case $\memn = \{1,2,\ldots,m_n,n\}$.
\end{theorem}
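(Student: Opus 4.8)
The plan is to decompose the walk according to which kind of index the memory selects and to show that the increments coming from the single most recent index are asymptotically irrelevant after the scalings of Theorem~\ref{CLT}. For $n>m$ the memory is $\{1,\dots,m,n\}$, so with probability $m/(m+1)$ the next increment is drawn from the frozen initial block and with probability $1/(m+1)$ from the recent index. Write $V_n=\sum_{k=1}^n Y_k$ for the signed contribution of the recent-based steps and $U_n=S_n-V_n$ for the remainder (which contains $S_m$ and all initial-block increments). By construction $V_n=T_{N_n}$ with $N_n=\sum_{k=1}^n I_k\le n$, so that $|V_n|\le\max_{1\le j\le n}|T_j|$.

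The first step is to dispose of $V_n$. The quoted result $T_n/\sqrt n\dto\mathcal N_{0,p/(1-p)}$ from Section~7 of \cite{111}, together with $\var(T_n)=\mathcal O(n)$ and a maximal inequality for this finite-memory sub-walk, gives $\max_{j\le n}|T_j|=\mathcal O_P(\sqrt n)$, whence $V_n=\mathcal O_P(\sqrt n)$. Multiplying $V_n=\mathcal O_P(\sqrt n)$ by the three normalizations of Theorem~\ref{CLT} produces quantities of order $\sqrt{m/n}$, $\sqrt{m/(n\log m)}$ and $\sqrt{m^{4-4p}/n}$ respectively, each of which tends to $0$ because $m/n\to0$ (and $0<4-4p<1$ in the superdiffusive range gives $m^{4-4p}/n\le m/n\to0$). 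This is exactly the point flagged before the statement: we do not claim that the recent index is eventually never chosen---indeed $\sum 1/m_k$ diverges---only that the scaled cumulative effect of choosing it is negligible.

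The second step is to identify the limit of $U_n$ with that of Theorem~\ref{CLT}. Conditioned on a recent index not being selected, i.e.\ on $I_{n+1}=0$, the increment $X_{n+1}$ has conditional mean $(2p-1)S_m/m$, precisely as in the initial-block model of Section~\ref{drei}; thus $U_n$ is governed by $S_m$ through the same recursions \eqref{condE}--\eqref{condVar}, the only difference being that the number of contributing steps is $n-N_n$ rather than $n-m$. Since $m_k\to\infty$, Ces\`aro summation gives $E(N_n)/n=\frac1n\sum_{k\le n}\mathcal O(1/m_k)\to0$, whence $N_n/n\pto0$, so that $(n-N_n)(2p-1)S_m/m$ agrees with the Theorem~\ref{CLT} leading term up to a factor $1+o_P(1)$ while the conditional-variance estimates carry over unchanged. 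Rerunning the conditioning-on-$S_m$ computation of the proof of Theorem~\ref{CLT} then yields, in each regime, the asserted Gaussian or $(2p-1)L$ limit for $U_n$, and Slutsky's theorem combined with $V_n\cdot(\text{scaling})\pto0$ finishes the argument.

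The main obstacle I anticipate lies in the disentangling of the second step: the recent-based and initial-block increments are interleaved, and a recent-based step copies the genuine predecessor $X_{k-1}$, which may itself be of either type, so $U_n$ and $V_n$ are dependent and the clean recursion \eqref{condVar} has to be re-derived for the conditional law of $U_n$ given $S_m$ and the selection pattern. The remaining effort is bookkeeping for the triangular scheme $m=m_n$---justifying the maximal bound $\max_{j\le n}|T_j|=\mathcal O_P(\sqrt n)$ and the estimate $N_n/n\pto0$ uniformly along the rows---but these corrections are of the order already absorbed in the $o(1)$ and $\mathcal O(m/n)$ terms of Proposition~\ref{moments}.
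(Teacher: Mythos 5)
Your proposal takes essentially the same route as the paper: the same decomposition of the walk into recent-based steps via the indicators $I_k$, the variables $Y_k$ and the auxiliary process $T_n$, the same appeal to Section 7 of \cite{111} for $T_n/\sqrt{n}\dto {\cal N}_{0,p/(1-p)}$, and the same observation that the three normalizations of Theorem \ref{CLT} reduce this ${\cal O}_P(\sqrt{n})$ contribution to quantities of order $\sqrt{m/n}$, $\sqrt{m/(n\log m)}$ and $\sqrt{m^{4-4p}/n}$, all vanishing since $m/n\to0$. Your second step (identifying the limit of $U_n$ via $N_n/n\pto0$ and Slutsky) and the maximal-inequality bound simply make explicit details that the paper's brief argument leaves implicit, so the proposal is correct and consistent with the paper's proof.
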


\begin{remark}\emph{Another glance at Section 7 of \cite{111} shows that the results in Proposition \ref{moments} also remain unchanged in the current situation.}\end{remark}
\begin{remark}\emph{If the elephant remembers the first $m_n$ steps and two most recent ones, the results of \cite{111}, Section 8, similarly tell us that the results from Section \ref{drei} remain unchanged. In the cited paper we also suggested, in Remark 8.2, that a similar result shoud hold if the elephant remembers the  $k$ most recent steps for some fixed number $k \in \mathbb{N}$. This, in turn, suggests that the results of Section \ref{drei} remain unchanged for
$\memn = \{1,2,\ldots,m_n, n-k, n-k+1,\ldots,n\}$ for any fixed $k$, and, in fact, also if
$\memn = \{1,2,\ldots,m_n, n-k_n, n-k_n+1,\ldots,n\}$ whenever $k_n/m_n\to0$ as $\nifi$.\vsb}
\end{remark}

An inspection of the delayed case shows that the same conclusions hold in that case; in fact, even more so, since the corresponding $T_n$-process terminates after a finite (geometric) number of steps (\cite{113}, Theorem 5.1). For completeness (and possible reference) we state the corresponding result separately, but omit the analogous remark.

\begin{theorem} The conclusions of Theorem \ref{CLT2} remain true for the case $\memn = \{1,2,\ldots,m_n,n\}$.
\end{theorem}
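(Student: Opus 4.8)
The plan is to transfer, to the delayed setting, the construction already used for the non-delayed case earlier in this section, and to show that the contribution of the most recent step is in fact \emph{more} negligible than before. With $\memn=\{1,2,\ldots,m_n,n\}$ the reference index is drawn uniformly from $m+1$ sites, so at any given time the most recent step is chosen with probability $1/(m+1)\to0$. Because $\sum_k 1/(m_k+1)$ diverges, the second Borel--Cantelli lemma does not apply and one cannot simply say that only finitely many steps reference the recent site; instead I would isolate exactly those steps into an auxiliary process and bound them directly.

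Following the recipe of this section, I would let $I_k$ be the indicator that the $k$-th step takes the most recent available step as its reference, collect the corresponding signed contributions, and form the associated process $T_n$ recording their accumulated value along the subsequence of times at which the recent step is actually chosen. The decisive feature of the delayed model is that a reference may return the value $0$ with probability $r>0$, and once a $0$ is produced every later copy or anti-copy of it is again $0$; this kills the self-referential chain. Consequently the process terminates after a geometrically distributed number of steps --- this is Theorem 5.1 of \cite{113} --- so the total recent-step contribution $R_n$ is a sum of a geometrically bounded number of $\pm1$ terms and is therefore tight, $R_n=O_P(1)$, rather than of order $\sqrt{n}$ as in the non-delayed analogue.

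I would then decompose $S_n=S_n^{(0)}+R_n$, where $R_n$ collects the recent-step contributions and $S_n^{(0)}$ collects the steps whose reference lies among the first $m$ sites. Conditionally on referencing one of those sites the choice is uniform on $\{1,\ldots,m\}$, exactly as in Section 4, so that --- up to the factor $m/(m+1)\to1$ in the relevant conditional expectations --- $S_n^{(0)}$ is governed by Proposition \ref{delmom} and Theorem \ref{CLT2}. Since $R_n=O_P(1)$ while in each of the three regimes the normalizing factor ($\sqrt{m}/n$ in (a), $\sqrt{m/\log m}/n$ in (b), and $m^{1+q-p}/n$ in (c)) tends to $0$ because $m/n\to0$, the normalized remainder converges to $0$ in probability. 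By Slutsky's theorem the normalized $S_n$ then has the same limit as the normalized $S_n^{(0)}$, which is precisely the assertion of Theorem \ref{CLT2}, including the $r\,\delta_0$ atom carried over unchanged in parts (a) and (b).

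The step I expect to be the main obstacle is turning the qualitative observation ``the recent step is chosen with vanishing probability'' into the quantitative tightness of $R_n$. One must verify that the geometric-termination argument of Theorem 5.1 of \cite{113} survives in the present triangular scheme, where the number of early sites grows with $n$, and that the decomposition $S_n=S_n^{(0)}+R_n$ introduces no hidden dependence that would spoil the application of Slutsky's theorem. Once the recent-step chain is seen to contribute only an $O_P(1)$ amount, independent of the normalization, everything else is routine and the conclusion follows exactly as in the non-delayed case treated above.
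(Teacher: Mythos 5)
Your argument is correct and is essentially the paper's own proof: the paper likewise isolates the steps based on the most recent step into the auxiliary $T_n$-process constructed earlier in this section, invokes Theorem 5.1 of \cite{113} to conclude that in the delayed case this process terminates after a geometrically distributed number of steps, and then notes that the resulting bounded contribution is annihilated by the normalizations of Theorem \ref{CLT2} because $m/n\to0$. Your explicit decomposition $S_n=S_n^{(0)}+R_n$ together with the Slutsky step merely spells out what the paper leaves implicit.
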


\section{The case $\lim_{\nifi} m/n=\alpha\in (0,1]$}
\setcounter{equation}{0}
\begin{proposition}\label{momentsc}
Let $m,n\to\infty$, such that $m/n \to \alpha \in (0,1]$. \\[2mm]
\textbf{\emph{(a)}} If $0<p<3/4$, then
\beaa 
E\Big(\frac{S_n\sqrt{m}}{n}\Big)&=&E\Big(\frac{S_m}{\sqrt{m}}\Big)\cdot\big((2p-1)+2\alpha\,(1-p)\big)+o(1) \to 0,\\
\var\Big(\frac{S_n\sqrt{m}}{n}\Big)&=&\frac{1}{3-4p}\cdot\big((2p-1)^2(1-\alpha)^2 +2(2p-1)\alpha(1-\alpha)+\alpha^2\big) +\alpha(1-\alpha)+o(1)\\
&=& \frac{1}{3-4p}\cdot\big((2p-1)(1-\alpha)+\alpha)\big)^2 +\alpha(1-\alpha)+o(1).
\eeaa
If, in particular, $p=1/2$, then $\var\big(\frac{S_n\sqrt{m}}{n}\big) \to \alpha$, or, equivalently, $\var\Big(\frac{S_n}{\sqrt{n}}\Big) \to 1$.\\[2mm]
\textbf{\emph{(b)}} If $p=3/4$, then
\beaa
E\Big(\frac{S_n\sqrt{m/\log m}}{n}\Big)&=&E\Big(\frac{S_m}{\sqrt{m\log m}}\Big) \cdot\big(1+\alpha\big)/2+o(1)\to0,\\
\var\Big(\frac{S_n\sqrt{m/\log m}}{n}\Big)&=&\var\Big(\frac{S_m}{\sqrt{m\log m}}\Big)\cdot \Big( \frac14(1-\alpha)^2 
+\alpha\Big) +o(1)=\frac14(1-\alpha)^2 +\alpha +o(1).
\eeaa
\textbf{\emph{(c)}} If $3/4<p<1$, then
\beaa E\Big(\frac{S_n m^{2(1-p)}}{n}\Big)&=&E(\frac{S_m}{m^{2(1-p)}})\cdot\big((2p-1)
+2\alpha(1-p)+o(1) \big)\to 0,\\
\var\Big(\frac{S_n m^{2(1-p)}}{n}\Big)&=&\var\big(L\big)\cdot\big((2p-1)^2(1-\alpha)^2 +2(2p-1)\alpha(1-\alpha)+\alpha^2\big) +o(1)\\
&=&\var\big(L\big)\cdot\big((2p-1)(1-\alpha)+\alpha\big)^2 +o(1).
\eeaa
\vsp
\end{proposition}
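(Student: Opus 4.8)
The plan is to base everything on the two exact conditional-moment identities (\ref{condE}) and (\ref{condVar}), which were derived for the present memory structure and hold for every pair $n>m$, irrespective of how $m$ and $n$ are coupled. The proof then amounts to letting $\nifi$ while \emph{retaining} the correction terms of order $m/n$ rather than discarding them. This is the only real departure from the proof of Proposition \ref{moments}: there $m/n\to0$ annihilated every off-diagonal correction, whereas here each of $m/n$, $m^2/n^2$, $(2m+1)/n$ and $m(m+1)/n^2$ converges to the corresponding power of $\alpha$ and survives. Because only convergence of the first two moments is asserted, none of the conditioning-and-Chebyshev machinery of Theorem \ref{CLT} is needed; it suffices to follow the deterministic coefficients.

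For the means I would take expectations in (\ref{condE}) to recover (\ref{m1}) and, after the regime-dependent normalization, cast it in the common shape
\[ E\Big(\frac{S_n\sqrt m}{n}\Big)=E\Big(\frac{S_m}{\sqrt m}\Big)\Big((2p-1)+2(1-p)\tfrac{m}{n}\Big), \]
and analogously with $E(S_m/\sqrt{m\log m})$ in (b) and $E(S_m/m^{2p-1})$ in (c), the latter using $m^{2(1-p)}E(S_m/m)=E(S_m/m^{2p-1})$. Sending $m/n\to\alpha$ turns the bracket into the constant
\[ c_\alpha:=(2p-1)+2\alpha(1-p)=(2p-1)(1-\alpha)+\alpha, \]
so each normalized mean tends to $c_\alpha$ times the limit of the normalized mean of $S_m$. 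By the asymptotics in \cite{bercu} this limit is $0$ in (a) and (b) (as $E(S_m/\sqrt m)={\cal O}(m^{2p-3/2})$ and $E(S_m/\sqrt{m\log m})={\cal O}(1/\sqrt{\log m})$), and equals $E(L)$ in (c), so that there the normalized mean tends to $c_\alpha E(L)$.

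For the variances I would substitute (\ref{condVar}) and multiply through by the square of the normalization. The bracket in (\ref{condVar}) is a polynomial in $m/n$; letting $m/n\to\alpha$ collapses it, through the identity
\[ (2p-1)^2(1-\alpha)^2+2(2p-1)\alpha(1-\alpha)+\alpha^2=\big((2p-1)(1-\alpha)+\alpha\big)^2=c_\alpha^2, \]
to $c_\alpha^2$, while the diagonal term $n-m$ contributes $m(n-m)/n^2\to\alpha(1-\alpha)$ once the factor $m/n^2$ is applied. The three regimes then differ only through the normalization and the quoted limit of the normalized second moment of $S_m$. In (a) the diagonal term survives and, with $E((S_m/\sqrt m)^2)\to1/(3-4p)$, yields $c_\alpha^2/(3-4p)+\alpha(1-\alpha)$ (which collapses to $\alpha$ when $p=1/2$). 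In (b) the extra $\log m$ in the denominator drives the diagonal term to $0$, so that $\var(S_m/\sqrt{m\log m})\to1$ leaves $c_\alpha^2=\tfrac14(1-\alpha)^2+\alpha$. In (c) the factor $m^{4(1-p)}(n-m)/n^2$ is of order $m^{3-4p}\to0$ for $p>3/4$, so again only the bracket survives and the normalized second moment tends to $c_\alpha^2E(L^2)$.

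The one step that needs genuine care is the superdiffusive case (c). There, unlike in (a) and (b), the normalized mean does not vanish but tends to $c_\alpha E(L)$, so the variance is a true difference of a non-vanishing second moment and a non-vanishing squared mean. The clean factorization $\var\to c_\alpha^2\,\var(L)$ rests on the fact, visible in the two displayed identities, that the constant multiplying the mean and the square root of the constant multiplying the second moment are one and the same $c_\alpha$; then $c_\alpha^2E(L^2)-c_\alpha^2E(L)^2=c_\alpha^2\var(L)$. Everything else is routine bookkeeping of the coefficients as $m/n\to\alpha$, together with the limits $\var(S_m/\sqrt m)\to1/(3-4p)$, $\var(S_m/\sqrt{m\log m})\to1$, and $S_m/m^{2p-1}\to L$ a.s.\ with $\var(S_m/m^{2p-1})\to\var(L)$, borrowed from \cite{bercu}.
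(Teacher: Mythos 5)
Your proposal is correct and takes essentially the same route the paper intends: Section 6 contains no separate proof, the authors clearly meaning exactly your argument of reusing the exact identities (\ref{condE}) and (\ref{condVar}) and retaining the $m/n$-correction terms, which now converge to the corresponding powers of $\alpha$ instead of vanishing. Your (correct) limit $\big((2p-1)+2\alpha(1-p)\big)E(L)$ for the normalized mean in case (c) in fact exposes a typo in the stated proposition, whose first display in (c) should read $E(S_m/m^{2p-1})$ and cannot tend to $0$ --- consistency with the variance line $\var(L)\cdot\big((2p-1)(1-\alpha)+\alpha\big)^2$ requires precisely the nonzero limit you derive.
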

We note, in particular, that if $\alpha=1$, then the results coincide asymptotically with those in our predecessors. This is, of course, no surprise, since it becomes more and more unlikely that the elephant does \emph{not\/} base a new step on the previous one.

We guess that there are limit distributions along the lines of Proposition \ref{CLT}  but our technique of proof fails as the conditional variances do not vanish asymptotically.

\section{Further remarks}
\setcounter{equation}{0}
\begin{itemize}
\item In \cite{111} we noted that for fixed, finite, memories $\memn=\{1,2,\dots,m\}$, there was no phase transition, in contrast to the case of full memory (recall \cite{bercu}). However there was no simple asymptotic normality. Our present results show, with appropriate variances, that once the memory of the past is gradually increasing (to infinity) asymptotic normality and phase transition occur.
\item In \cite{111} we have shown that $S_n/\sqrt{n}$ is asymptotically normal when $\memn=\{n-m, n-m+1,\dots,n\}$ with fixed $m$. The asymptotic variance for arbirtrary $m$ was calculated in \cite{Brasil} to be $\sigma^2=\frac{m-1+2p}{2(1-p)(2(1-p)m+2p-1)}$. We conjecture that asymptotic normality still holds if $\memn=\{n-m_n, n-m_n+1,\dots, n\}$ with increasing $m_n$ or also if
$\memn = \{1,2,\ldots,m_n, n-k_n, n-k_n+1,\ldots,n\}$ whenever $k_n/m_n\to0$ as $\nifi$. We also conjecture that  if $ m_n$ does not increase too rapidly there are no phase transitions and that the asymptotic variance is given by $\sigma^2=1/(4(1-p)^2)$.
\end{itemize}

\medskip\noindent {\small Allan Gut, Department of Mathematics,
Uppsala University, Box 480, SE-751\,06 Uppsala, Sweden;\\
Email:\quad \texttt{allan.gut@math.uu.se}\\
URL:\quad \texttt{http://www.math.uu.se/\~{}allan}}
\\[4pt]
{\small Ulrich Stadtm\"uller, Ulm University, Department of Number
Theory
and Probability Theory,\\ D-89069 Ulm, Germany;\\
Email:\quad \texttt{ulrich.stadtmueller@uni-ulm.de}\\
URL:\quad
\texttt{https://www.uni-ulm.de/en/stadtmueller/}}

\end{document}